\numberwithin{equation}{section}
\titleformat*{\section}{\large\bfseries}
\newcommand{\RR}{\mathbb R}
\newcommand{\ZZ}{\mathbb Z}
\newtheorem{thm}{Theorem}[section]
\newtheorem{lemma}[thm]{Lemma}
\newtheorem{rem}[thm]{Remark}
\newtheorem{hyp}[thm]{Hypothesis}
\newcommand{\dprod}[2]{\left\langle #1,#2\right\rangle}
\newcommand{\norm}[1]{\left\lVert #1\right\rVert}
\title{Schr\"{o}dinger operators with decaying randomness - Pure point spectrum.}
\author{Anish Mallick\footnote{E-mail:\texttt{anish.mallick@icts.res.in}, Institute: ICTS-TIFR Bangaluru, India.} 
~\& Dhriti Ranjan Dolai\footnote{E-mail:\texttt{dhriti\_vs@isibang.ac.in}, Institute: ISI Bangalore, India.}}
\date{\today}
\begin{document}

\maketitle

\begin{abstract}
\noindent For Schr\"{o}dinger operator with decaying random potential
with fat tail single site distribution the negative spectrum shows a transition from essential spectrum to discrete spectrum.
We study the Schr\"{o}dinger operator 
$H^\omega=-\Delta+\displaystyle\sum_{n\in\mathbb{Z}^d}a_n\omega_n\chi_{_{(0,1]^d}}(x-n)$ on $L^2(\mathbb{R}^d)$. 
Here we take $a_n=O(|n|^{-\alpha})$ for large $n$ where $\alpha>0$, and $\{\omega_n\}_{n\in\mathbb{Z}^d}$ are i.i.d real random variables with absolutely continuous distribution $\mu$ 
such that $\frac{d\mu}{dx}(x)=O\big(|x|^{-(1+\delta)}\big)~as~|x|\to\infty$, for some $\delta>0$.
We show that $H^\omega$ exhibits exponential localization on negative part of spectrum independent of the parameters chosen.
For $\alpha\delta\leq d$ we show that the spectrum is entire real line almost surely,
but for $\alpha\delta>d$ we have $\sigma_{ess}(H^\omega)=[0,\infty)$ and negative part of the spectrum is discrete almost surely.
In some cases we show the existence of the absolutely continuous spectrum.
\end{abstract}
{\bf AMS 2010 MSC:} 81Q10, 35J10, 82B44, 47B80.\\
{\bf Keywords:} Schr\"{o}dinger operator, Decaying random potential, Localization, Absolutely continuous spectrum.

\begin{section}{Introduction}
We consider the random Schr\"{o}dinger operator $H^\omega$ on $L^2(\mathbb{R}^d)$ given by
\begin{equation}
\label{model}
H^\omega=-\Delta + V^\omega,
\end{equation}
where $\Delta$ is the Laplacian defined by
$$(\Delta f)(x)=\sum_{i=1}^d \frac{\partial^2 f}{\partial x_i^2}(x)\qquad\forall x\in\RR^d,f\in C_c(\RR^d),$$
is a self-adjoint operator and for identically distributed independent random variables $\omega:=\{\omega_n\}_{n\in\ZZ^d}$ the operator $V^\omega$ is defined by
\begin{align*}
(V^\omega f)(x)=\sum_{n\in\mathbb{Z}^d}a_n\omega_n \chi_{_{(0,1]^d}}(x-n)f(x)\qquad\forall x\in\RR^d,f\in C_c(\RR^d),
\end{align*}
where $\chi_{(0,1]^d}$ is characteristic function on the cube $(0,1]^d$. 
The sequence $a_n$ is non-negative and decays to zero at infinity at a rate $\alpha$, i.e $a_n= O(|n|^{-\alpha})$ for $n$ large.
The random variable $\omega_0$ follow a real absolutely continuous distribution $\mu$ with bounded density such that
\begin{equation}\label{probdisteq1}
 \frac{d\mu}{dx}(x)=O\big(|x|^{-(1+\delta)}\big)~as~|x|\to\infty,~~\delta>0.
\end{equation}
Following Kolmogorov construction, we will view $\omega$ as a random variable on the probability space $(\Omega,\mathcal{B},\mathbb{P})$.
Note that we can write the action of $V^\omega$ as multiplication by a random function, which will also denote as $V^\omega$, i.e
$$(V^\omega u)(x)=V^\omega(x) u(x)\qquad \forall x\in\RR^d,u\in C_c(\mathbb{R}^d).$$
By above definitions it is easy to see that $C_c^\infty(\RR^d)$ is domain of definition for $H^\omega$ almost surely, hence the operator $H^\omega$ is densely defined. 
Though it should be noted that even if $V^\omega$ is bounded in any compact set, this does not imply that $H^\omega$ is essentially self adjoint. 
We will focus our work in the region of $\alpha$ and $\delta$, where $H^\omega$ is essentially self-adjoint almost surely. 
\\\\
\noindent Here we will mostly focus on the spectrum of $H^\omega$ below zero. 
Since $-\Delta$ is non-negative operator, the negative spectrum of $H^\omega$ is associated with the negative part of the potential $V^\omega_{-}(x)=\min\{V^\omega(x), 0\}$.
Note that if the support of $\mu$ have lower bound than the negative spectrum is mainly determined by $\{a_n\}_n$, 
hence we are working with $\mu$ which satisfies the relation \eqref{probdisteq1}.
If we assume that the second moment exists for $\mu$ and $a_n$ decays fast enough at infinity ($\alpha>1$), 
then Cook's method \cite[Theorem XI.16]{sca-b} will ensure the existence of wave operators, which implies the absolutely continuous spectrum on $[0,\infty)$.
\\\\
\noindent There are many works which have dealt with this question, for example \cite{FGKM,KKO,Kr,B,FBB,SN}. 
The results presented in this work are not too surprising considering similar result do exists in discrete case, for example \cite{KKO,Kr}.
Figotin-Germinet-Klein-M\"{u}ller \cite{FGKM} considered the similar model with compactly supported $\mu$ with  absolutely continuous single site potential.
The authors showed that the random operator exhibits localization of eigenfunctions almost surely and dynamical localization below zero.
For $0<\alpha<2$ they showed that the operator $H^{\omega}$ has infinitely many negative eigenvalues almost surely and provided a bound 
for $N^{\omega}(E):=\#\{x\in\sigma(H^\omega):x<E\}$ in terms of stationary model (i.e choosing $a_n=1$).
\\\\
\noindent Kirsch-Krishna-Obermeit \cite{KKO} (see also \cite{Kr, MK} and \cite{KIRSCH}) considered $H^{\omega}=-\Delta+V^{\omega}$ on $\ell^2(\mathbb{Z}^d)$, 
where $V^{\omega}(n)=a_n\omega_n$ and $\Delta$ is the adjacency operator for the graph $\ZZ^d$. 
Under certain restriction on $\{a_n\}_n$ and $\mu$ they showed that $\sigma(H^{\omega})=\mathbb{R}$ and $\sigma_c(H^{\omega})\subseteq[-2d,2d]$ almost surely.
Jak{\v{s}}i{\'c}-Last \cite[Theorem 1.2]{JL} showed that singular spectrum in absent in $(-2d,2d)$ for $d\ge3$ and $a_n\sim |n|^{-\alpha}$ for some $\alpha>1$.
Hundertmark-Kirsch \cite{kirsch-hundermat} studied several models of $d$-dimensional Schr\"{o}dinger operators with non-stationary random potentials including  sparse potential. 
For a large class of sparse potential the authors establish the existence of 
absolutely continuous spectrum above zero.
Krutikov \cite{denis} studied the random Schr\"{o}dinger operator $H^\omega$, with bounded random sparse potential on $L^2(\mathbb{R}^d)$ for $d\ge 3$ 
and showed that $[0,\infty)\subset \sigma_{ac}(H^\omega)$ almost surely. The author used Cook's method to show the existence of 
wave operator.
Safronov \cite{saf} considered the above model with an extra disorder strength parameter for $d\geq 3$ and showed that
the absolutely continuous spectrum of $H^\omega$ is supported on $[0,\infty)$ almost surely for almost every realization of disorder strength,
under the condition that $\sum_{n\neq 0}\frac{{a_n}^2}{|n|^{d-1}}<\infty$ and $\{\omega_n\}_n$ are bounded random variables with zero mean.
Another interesting work is Frank-Safronov \cite{Fr-Sa}, which deals with the case of $V^\omega=\sum_{j\in J}\omega_j f_j(x)$,
where $\omega_j$ are bounded i.i.d random variables with finite variance and zero mean.
\\\\
For the case of growing potential, Gordon-Jaksic-Molchanov-Simon\cite{GJMS} considered the operator 
$$H^\omega=-\Delta+\displaystyle\sum_{n\in\mathbb{Z}^d}|n|^\alpha\omega_n|\delta_n\rangle\langle\delta_n|,\qquad\alpha>0 $$
on $\ell^2(\mathbb{Z}^d)$, where $\{\omega_n\}_n$ are i.i.d random variable with uniformly distributed on $[0,1]$. 
The authors showed that  the spectrum is discrete for $\alpha>d$, 
and for $\alpha \leq d$ there is a dense point spectrum along with discrete spectrum.
They also showed that the eigenfunctions are decaying faster than exponentially decay.
\\\\
\noindent In one dimension a larger class of operator could be studied in terms of Jacobi matrices. 
Jacobi matrices with decaying random potentials were first studied by Simon \cite{B} and Delyon-Simon-Souillard \cite{FBB}.
Kotani-Ushiroya \cite{SN} provided similar results in the case of Schr\"{o}dinger operators in one dimensional setting.
One dimensional case exhibits rich structure for the spectrum, hence we refer to work by Kiselev \cite{Kis}, 
Last-Simon \cite{LS} and Kiselev-Last-Simon \cite{KLS} and the literature therein for further references.
\\\\
\end{section}
We show that the random operator $H^\omega$ defined as in (\ref{model}) is essential self adjoint almost surely
for some restriction on $\alpha$ and $\delta$ (the Lemma \ref{lemma2} in Appendix).
For rest of the work we will be working under the following hypothesis.
\begin{hyp}\label{hyp}
~
\begin{enumerate}
\item[(1)] The single site distribution $\mu$ is absolutely continuous and has fat tail in the following sense
 $$\frac{d\mu}{dx}(x)=O\big(|x|^{-(1+\delta)}\big)~as~|x|\to\infty,~\delta>0.$$
\item[(2)] $\{a_n\}_{n\in\ZZ^d}$ satisfies $a_n= O(|n|^{-\alpha})$ as $|n|\rightarrow \infty$, for some $\alpha>0$ fixed. The norm $|n|$ will be used to denote $\|n\|_\infty$.
\item[(3)]  The parameters $\alpha$ and $\delta$ satisfies $(2+\alpha)\delta>d$.
\end{enumerate}
\end{hyp}
\begin{rem}
Any operator of the form \eqref{model} which satisfies all the above hypothesis are essentially self adjoint. It's proof can be found in Lemma \ref{lemma2}.
\end{rem}

\noindent We assume $\mu$ is absolutely continuous with bounded density to obtain good Wegner estimate which is uniform with respect to
center of the box as our potential is not ergodic. Here we will use modified version
of \cite[Lemma (Wegner estimate)]{FGKM} and this Wegner estimate is enough to get Initial scale estimate.
We refer Combes-Hislop-Klopp\cite{JHF}, Combes-Hislop \cite{CH}, Hundertmark-Killip-Nakamura-Stollmann-Veseli\'{c}
\cite{HKNSV}, Kirsch \cite{wer}, Kirsch-Veselic \cite{KIV} and Stollmann \cite{peter-stol} for more about Wegner estimate.  
To perform multiscale analysis in the region $(-\infty, 0)$, we need the Wegner estimate for small enough interval around $E$ for any $E\in(-\infty, 0)$. 
Third condition is there to guarantee that there is unique self adjoint extension of the densely defined operator $H^\omega$.
\\\\
Now we are ready to state our main result:
\begin{thm}
\label{main-thm} 
On $L^2(\RR^d)$ consider the operator $H^\omega$ defined by \eqref{model}, where $\mu$ and $\{a_n\}_{n\in\ZZ^d}$ satisfies the Hypothesis \ref{hyp}. Then
\begin{enumerate}
 \item[(a)] For $\alpha\delta\leq d$ we have $\sigma_{ess}(H^\omega)=\mathbb{R}$ almost surely.
 \item[(b)] For $\alpha\delta>d$, we have $\sigma_{ess}(H^\omega)=[0,\infty)$ and below zero the operator $H^\omega$ has discrete spectrum almost surely.
 \item[(c)] The operator $H^\omega$ shows exponential localization on $(-\infty, 0)$ almost surely.
 \item[(d)] For $\delta>2$ and $\alpha>1$ we have $[0,\infty)\subset\sigma_{ac}(H^\omega)$ almost surely.
\end{enumerate}
\end{thm}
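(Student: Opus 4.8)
The four assertions share one computation: the single-site tail is transported through the coefficients $a_n$. From $\frac{d\mu}{dx}(x)=O(|x|^{-(1+\delta)})$ one gets $\mathbb{P}(\omega_n<-s)=O(s^{-\delta})$, hence $\mathbb{P}(a_n\omega_n<-\epsilon)=O\big(\epsilon^{-\delta}|n|^{-\alpha\delta}\big)$ and, for a prescribed depth, $\mathbb{P}\big(a_n\omega_n\in(-V_0-\eta,-V_0+\eta)\big)\asymp|n|^{-\alpha\delta}$. Because the number of $n$ with $\|n\|_\infty=k$ is of order $k^{d-1}$, the series $\sum_n|n|^{-\alpha\delta}\sim\sum_k k^{d-1-\alpha\delta}$ converges precisely when $\alpha\delta>d$, and the dichotomy in (a)/(b) is exactly the divergence/convergence of this series fed into the two halves of the Borel--Cantelli lemma. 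Before splitting into cases I would record that $[0,\infty)\subseteq\sigma_{ess}(H^\omega)$ almost surely: since $|a_n\omega_n|$ is typically small far out, one finds infinitely many arbitrarily large boxes escaping to infinity on which $\sup|V^\omega|<\epsilon$, and truncated modulated wave packets with $|\xi|^2=E$ placed there form a singular Weyl sequence for each $E\ge0$. For (a), given $E<0$ fix $V_0=V_0(E)$ so that the isolated unit-cube well $-\Delta-V_0\chi_{(0,1]^d}$ has ground state exactly $E$ (possible in every dimension by monotonicity of the ground state in the depth); the windowed estimate, divergence of $\sum_n|n|^{-\alpha\delta}$ for $\alpha\delta\le d$, independence, and the second Borel--Cantelli lemma then produce infinitely many cubes $n_k\to\infty$ with well depth within $\eta$ of $V_0$, and the relocated ground states---after controlling the neighbouring-cube potential against the exponential decay of $\phi_E$---give a singular Weyl sequence near $E$. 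With the common step this yields $\sigma_{ess}=\mathbb{R}$. Here the fat tail is used as a genuine lower bound, the only point requiring two-sided control of $\frac{d\mu}{dx}$.

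For (b) the condition $\alpha\delta>d$ makes the series converge, so the first Borel--Cantelli lemma shows that for each $\epsilon>0$ only finitely many cubes satisfy $a_n\omega_n<-\epsilon$ almost surely. Writing the negative part as $-V^\omega_-=W_1+W_2$ with $W_1,W_2\ge0$, where $W_1$ is supported on these finitely many cubes (bounded, compact support, hence relatively $-\Delta$-compact) and $\|W_2\|_\infty\le\epsilon$, and using $-\Delta+V^\omega_+\ge0$, one gets $\sigma_{ess}(H^\omega)=\sigma_{ess}(-\Delta+V^\omega_+-W_2)\subseteq[-\epsilon,\infty)$. Letting $\epsilon\downarrow0$ and combining with the common lower bound yields $\sigma_{ess}(H^\omega)=[0,\infty)$, so the spectrum in $(-\infty,0)$ consists of eigenvalues of finite multiplicity that can accumulate only at $0$, i.e. it is discrete.

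Part (c) is the analytic heart and the main obstacle. I would run a multiscale analysis on each compact window $[E_1,E_2]\subset(-\infty,0)$ from two inputs. The first is a Wegner estimate uniform in the centre of the box: since the model is not ergodic, translation invariance is unavailable, so I would obtain it from the boundedness of $\frac{d\mu}{dx}$ by adapting the Wegner estimate of \cite{FGKM} to the unbounded fat-tailed single-site law. The second is an initial length-scale estimate at negative energy: deep wells being improbable far out, a box of side $L$ centred far from the origin has, with high probability, no spectrum within a fixed distance of $E<0$, which furnishes the required decay of the finite-volume resolvent. Feeding both into the continuum MSA induction yields exponential decay of finite-volume Green's functions throughout $(-\infty,0)$, which upgrades to pure point spectrum with exponentially decaying eigenfunctions. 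The difficulties to watch are the unboundedness of the potential in the Wegner and resolvent estimates and the need to keep every constant uniform in the box location; it is the restriction to negative energies, where the typical potential is small, that does the real work and makes localization hold for all admissible $\alpha,\delta$.

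Finally, for (d) the hypotheses put us in the short-range regime: $\delta>2$ makes the second moment of $\mu$ finite and $\alpha>1$ makes $a_n$ decay faster than $|n|^{-1}$. I would establish existence of the wave operators $\Omega^\pm=\lim_{t\to\pm\infty}e^{itH^\omega}e^{it\Delta}$ (strong limit) by Cook's method: testing on $\phi$ with $\hat\phi\in C_c^\infty(\mathbb{R}^d\setminus\{0\})$, the free packet $e^{it\Delta}\phi$ concentrates at $|x|\sim|t|$ where $a_x\lesssim|t|^{-\alpha}$, so $\|V^\omega e^{it\Delta}\phi\|_2\lesssim|t|^{-\alpha}$ times a random factor whose almost sure integrability over $t\in[1,\infty)$ follows from the finite second moment. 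Integrability of the Cook integrand gives the wave operators, and the intertwining relation yields $[0,\infty)=\sigma_{ac}(-\Delta)\subseteq\sigma_{ac}(H^\omega)$ almost surely.
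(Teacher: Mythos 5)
Your overall architecture coincides with the paper's: a Borel--Cantelli dichotomy on $\sum_n|n|^{-\alpha\delta}$ driving (a) versus (b), translated Weyl sequences for the essential spectrum, multiscale analysis for (c), and Cook's method for (d). Two points are genuinely different. For (a), the paper does not use the exponentially decaying ground state of the isolated well; it builds compactly supported approximate eigenfunctions of $-\Delta+\lambda\chi_{[0,1)^d}$ and demands that the potential be $\epsilon$-small on a window of growing radius $k_m=|m|^\theta$, $\theta<\alpha\delta/d$, around each selected centre (this is where $\alpha\delta\le d$ enters, to keep $\sum_m\mathbb{P}(A^{\epsilon,k_m}_m)$ divergent). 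Your variant, relocating the true bound state and beating the surrounding potential with its exponential decay, also works, but you must quote an a.s.\ polynomial bound on $|V^\omega(j)|$ at infinity (available by Borel--Cantelli from the tail of $\mu$) to control the far field; the paper sidesteps this by compact supports. For (b), your route via the decomposition $W_1+W_2$ (finitely many deep cubes as a relatively compact perturbation, plus an $\epsilon$-small remainder) and $\epsilon\downarrow0$ is a clean alternative to the paper's argument, which instead Neumann-brackets $H^\omega$ over unit cubes and bounds the eigenvalue counting function $\mathcal{N}^\omega(-\epsilon)$ cube by cube using a Reed--Simon estimate; the paper's version yields a quantitative count, yours is shorter and gives the same qualitative conclusion.

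One caveat on (c): your initial length-scale estimate as stated ("a box of side $L$ centred far from the origin has, with high probability, no spectrum near $E$") is not uniform in the centre of the box, whereas the bootstrap MSA of Germinet--Klein requires the initial estimate for boxes centred at every lattice point, including near the origin where the potential is not small. The paper obtains uniformity by routing the initial estimate through the centre-uniform Wegner bound (conditioned on the event $\Omega_L=\{|V^\omega(n)|<L^a \text{ on } \Lambda_L\}$ to tame the fat tails) followed by Combes--Thomas; you already have that Wegner estimate in hand, so the fix is immediate, but as written this step has a gap. Part (d) matches the paper, which verifies the two integrability conditions of Krutikov's theorem rather than running Cook's estimate by hand.
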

\begin{rem}
~
\begin{enumerate}
\item Note that for $\delta>\frac{1}{2}$, there is a non-trivial set of $\alpha$ such that the condition (3) of hypothesis \ref{hyp} and condition of (a) in the above theorem are satisfied. 
 So the result of (a) in above theorem is not vacuous. 
\item On a similar note, for any $\delta>0$ we can choose $\alpha$ to be large enough so that condition (3) of hypothesis \ref{hyp} and condition of (b) in above theorem are satisfied.
\item For part (b) in the above theorem, the possibility ``zero is the only limit point for negative eigenvalues'' is not ruled out. 
But if we take  $(\alpha-2)\delta>d$, then Lemma \ref{lemma3} shows that there is only finitely many negative eigenvalues of $H^\omega$ almost surely.
\item The statement (c) imply that $\sigma_{c}(H^\omega)\subseteq [0,\infty)$ almost surely.
\item In (d) we took $\delta>2$ to ensure the existence of second moment of $\mu$. 
For Cook's method to show the existence of wave operator, we need $\alpha>1$.

\item Combining the statements of (d) and (a) of the theorem for $d\geq 3$, implies that 
$\sigma(H^\omega)=\mathbb{R}$ and $\sigma_c(H^\omega)=[0,\infty)$ almost surely, for some choice of $\alpha$ and $\delta$.
\end{enumerate}
\end{rem}
\begin{rem}
As a digression, it should be noted that we can define the random potential to be of the form
$$V^\omega(x)=Q(x)\sum_{n\in\ZZ^d}\omega_n\chi_{_{(0,1]^d}}(x-n),$$
where $Q:\RR^d\rightarrow [0,\infty)$ is continuous with 
$$\inf_{x\in (0,1]^d}Q(x+n)\sim |n|^{-\alpha}~~\&~~\sup_{x\in (0,1]^d}Q(x+n)\sim |n|^{-\alpha}$$
for $n$ large. This will not change the proofs fundamentally. 
\end{rem}

\noindent The statements of the theorem has two distinct parts. First two statements deal with the essential spectrum of the operator and second part deals with the nature of the spectrum,
i.e whether a point in the spectrum is in pure point part or in the absolutely continuous part.
For the negative part of the spectrum, we show exponential localization using the multiscale analysis.
This was first developed to show Anderson localization for Anderson tight binding model by Fr\"{o}hlich-Spencer \cite{FRS} and Fr\"{o}hlich-Martinelli-Spencer-Scoppolla \cite{FrMSS}
and later simplified by Von Dreifus \cite{Dr} and Von Dreifus-Klein \cite{DrK}.
Later the method was refined by Germinet-Klein \cite{GA} (See also \cite{AK}) and is known as Bootstrap multiscale analysis.
There are other methods of proving pure point spectrum, one of which is by showing dynamical localization.
Another method is through fractional moment method first introduced by Aizenman-Molchanov \cite{AM} (see also \cite{A,ASFH}) for discrete case 
and later improved to continuum model by Aizenman-Elgart-Naboko-Schenker-Stolz  \cite{AENSS}.
Though we have used the multiscale analysis here, fractional moment method is also applicable.
\\\\
\noindent For showing essential spectrum, we constructed Weyl sequence of approximate eigenfunctions for the operator $H^\omega$. 
This method works because the potential behaves like sparse potential and so we can find a sequence of boxes of increasing width such that except for one point everywhere the potential is negligible.
So, we can use a Weyl sequence of Laplacian itself to approximate the eigenfunctions for the full operator.
As for the statement involving discrete spectrum, we show that the eigenvalue counting function $N^\omega(E):=\#\{x\in\sigma(H^\omega):x<E\}$ is finite almost surely, for $E<0$ .
\\\\
\noindent In the rest of the article we use $V^\omega(n)$ to denote the potential at the box $n+(0,1]^d$ for $n\in\mathbb{Z}^d$. 
From our assumption on $a_n$ and $\mu$, we can work with $V^\omega(n)=\frac{\omega_n}{|n|^\alpha}$ for large enough $|n|$ and $V^\omega(0)=\omega_0$.
\begin{section}{Proof of the results}
The proof involving the essential spectrum in part (a) and (b) of the theorem  follows through the following lemma. 
The proof of the lemma is based on the work Kirsch-Krishna-Obermeit \cite{KKO}.
Part (a) of our main theorem is immediate and for part (b), the statement about essential spectrum also follows from the lemma.
\begin{lemma}\label{lemma1}
 If $H^\omega$ as in (\ref{model}) with Hypothesis \ref{hyp} then 
 \begin{enumerate}
 \item $\displaystyle\bigcup_{\lambda\in\mathbb{R}}\sigma (-\Delta+\lambda\chi_{[0,1)^d})=\mathbb{R}$.
 \item $[0,\infty)\subseteq\sigma_{ess}(H^\omega)$ almost surely.
  \item If $\alpha\delta\leq d$, then  $\displaystyle\bigcup_{\lambda\in\mathbb{R}}\sigma (-\Delta+\lambda\chi_{[0,1)^d})\subset\sigma_{ess}(H^\omega)$ almost surely.
 \end{enumerate}
 \end{lemma}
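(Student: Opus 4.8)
The plan is to handle the three statements in order, the first being a deterministic spectral computation for the one-well operator and the latter two being constructions of singular Weyl sequences supported far from the origin, with the probabilistic input entering only through Borel--Cantelli arguments.

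For part (1) I would study $-\Delta+\lambda\chi_{[0,1)^d}$ via the min-max principle. Since $\chi_{[0,1)^d}$ is bounded with compact support, it is a relatively compact perturbation of $-\Delta$, so $\sigma_{ess}(-\Delta+\lambda\chi_{[0,1)^d})=[0,\infty)$ for every $\lambda$, and in particular $[0,\infty)$ already lies in the union at $\lambda=0$. The remaining content is to track the lowest eigenvalue $E_0(\lambda)=\inf_{\norm{f}=1}\bkt{\norm{\nabla f}^2+\lambda\int_{[0,1)^d}|f|^2}$: it is nondecreasing and $1$-Lipschitz in $\lambda$, hence continuous, and testing against a fixed $f$ supported in the cube with $\int_{[0,1)^d}|f|^2=1$ gives $E_0(\lambda)\le\norm{\nabla f}^2+\lambda\to-\infty$ as $\lambda\to-\infty$, while $E_0(\lambda)\uparrow 0$ as $\lambda$ increases to the binding threshold. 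By the intermediate value theorem the negative eigenvalues sweep out all of $(-\infty,0)$, which together with $[0,\infty)$ yields $\bigcup_\lambda\sigma(-\Delta+\lambda\chi_{[0,1)^d})=\RR$.

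For part (2), for each $E\ge 0$ I would build a Weyl sequence from slowly varying truncated plane waves $f_{L,c}(x)=\phi((x-c)/L)\,e^{ik\cdot x}$ with $|k|^2=E$ and $\phi$ a fixed bump, so that $\norm{(-\Delta-E)f_{L,c}}=O(1/L)$. The probabilistic input is the existence of arbitrarily large and arbitrarily distant \emph{clean} cubes: since $\PP(|V^\omega(n)|>\epsilon)=\PP(|\omega_n|>\epsilon|n|^\alpha)=O(|n|^{-\alpha\delta})$, a cube of side $L$ at distance $R$ satisfies $\sup|V^\omega|<\epsilon$ with probability at least $(1-C\epsilon^{-\delta}R^{-\alpha\delta})^{L^d}\to 1$. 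Choosing disjoint cubes $Q_j$ with $L_j\to\infty$ and centers receding fast enough that $\sum_j\PP(Q_j\text{ not clean})<\infty$, the first Borel--Cantelli lemma gives clean cubes of unbounded size escaping to infinity almost surely; on such a cube $\norm{V^\omega f_{L,c}}<\epsilon$, so $\norm{(H^\omega-E)f_{L,c}}\to 0$ along a sequence whose supports escape to infinity, whence $[0,\infty)\subseteq\sigma_{ess}(H^\omega)$ (first for a countable dense set of $E$, then for all $E$ since $\sigma_{ess}$ is closed).

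For part (3), given $E<0$ choose by part (1) a depth $\lambda$ with $E_0(\lambda)=E$ and let $\psi$ be the corresponding normalized ground state, which decays exponentially because $E<0=\inf\sigma_{ess}$. I would seek boxes $n$ far from the origin at which $H^\omega$ locally mimics this single well, namely $V^\omega(n)\in[\lambda-\eta,\lambda+\eta]$ with the surrounding $K$-box below $\epsilon$; a truncated translate of $\psi$ to such a box is then an approximate eigenfunction at energy $E$, the error being controlled by $\eta$, by $\epsilon$ times the exponential tail, and by the truncation, all vanishing as $\eta,\epsilon\to 0$ and $K\to\infty$. The decisive computation is that $\PP(V^\omega(n)\in[\lambda-\eta,\lambda+\eta])$ is the $\mu$-mass of an interval of width $2\eta|n|^\alpha$ centered at $\lambda|n|^\alpha$, which by \eqref{probdisteq1} is $\asymp|n|^\alpha\cdot(|n|^\alpha)^{-(1+\delta)}=|n|^{-\alpha\delta}$, the cleanliness of the neighborhood costing only a factor tending to $1$. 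Summing along a subsequence of well-separated (hence independent) boxes, the total diverges precisely when $\alpha\delta\le d$, so the second Borel--Cantelli lemma produces infinitely many good boxes almost surely; the associated localized ground states form a singular Weyl sequence for $E$, giving $(-\infty,0)\subseteq\sigma_{ess}(H^\omega)$ after running over a countable dense set. The main obstacle I anticipate is exactly here: making the approximate-eigenfunction estimate quantitative while controlling the \emph{random} potential outside the clean $K$-neighborhood against the exponential decay of $\psi$, and arranging the good-box events to be genuinely independent so that the borderline divergence of $\sum_n|n|^{-\alpha\delta}$ can be converted into infinitely many almost-sure occurrences.
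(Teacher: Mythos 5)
Your proposal is correct and, for parts (1) and (2), follows essentially the same route as the paper: the monotone continuous sweep of the lowest eigenvalue of the one-well operator, and Weyl sequences of truncated plane waves planted on clean cubes found by Borel--Cantelli (you use the first Borel--Cantelli lemma with summable failure probabilities and rapidly receding centers, the paper uses the second with divergence and independence of well-separated boxes; both work). The one genuine divergence is in part (3). The paper does not use the exponentially decaying ground state of the single well at all: for $E\in\sigma(-\Delta+\lambda\chi_{[0,1)^d})$ it takes a \emph{compactly supported} Weyl sequence $\psi_n\in C_c^\infty(\RR^d)$ for $-\Delta+\lambda\chi_{[0,1)^d}-E$ and translates $\psi_p$ to a good box whose clean neighbourhood $\Lambda_{k_{m}}(m)$ (with $k_m=|m|^\theta$, $\theta<\alpha\delta/d$) already contains its support, so the error is just $\frac{1}{p}+\frac{1}{n_p}$ and the obstacle you flag --- controlling the random potential outside the clean region against the exponential tail of $\psi$ --- never arises. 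That obstacle is real in your version: in the regime $\alpha\delta\le d$ the potential is almost surely unbounded on $\ZZ^d$, and you would need the almost-sure polynomial bound $|V^\omega(j)|=O(|j|^{2-\epsilon})$ from the self-adjointness lemma (Lemma \ref{lemma2}) together with a clean-box radius growing at least logarithmically in $|m|$ to beat the tail; this is doable but is exactly the extra work the paper's compactly supported Weyl sequence avoids. Your construction also only targets $E=E_0(\lambda)$ rather than all of $\sigma(-\Delta+\lambda\chi_{[0,1)^d})$, but since part (1) shows the ground-state energies already sweep $(-\infty,0)$ and part (2) covers $[0,\infty)$, this restriction costs nothing. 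Finally, note that both you and the paper need the tail of $\mu$ to be bounded \emph{below} by $c|x|^{-(1+\delta)}$ (not merely $O$) to get the divergence $\sum_m|m|^{-\alpha\delta}=\infty$ from below; this is implicit in the hypothesis as the paper uses it.
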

\begin{proof}~\\
{\bf 1:} First note that 
$$[0,\infty)\subset \sigma(-\Delta+\lambda \chi_{(0,1]^d})\qquad\forall \lambda\in\RR,$$ 
so we always have
$$\displaystyle [0,\infty)\subset \bigcup_{\lambda\in\mathbb{R}}\sigma(-\Delta+\lambda\chi_{[0,1)^d}).$$
Hence we only need to show  
$$(-\infty,0)\subset \bigcup_{\lambda\in\RR}\sigma(-\Delta+\lambda\chi_{[0,1)^d}).$$
Using the fact that the operator $-\Delta+\lambda\chi_{[0,1)^d}$  has finitely many negative eigenvalues for $\lambda<0$,
we can use the Hellmann-Feynman theorem to get
$$\frac{dE_\lambda^{min}}{d\lambda}=\dprod{\phi_\lambda}{\chi_{[0,1)^d}\phi_\lambda}\geq 0,$$
where $E_\lambda^{min}$ is the minimum eigenvalue of $-\Delta+\lambda\chi_{[0,1)^d}$ and $\phi_\lambda$ is the corresponding eigenfunction. 
So $E_\lambda^{min}$ is continuous monotonic function of $\lambda$. Now using the min-max principle, we obtain
$$E_{\lambda}^{min}\leq \dprod{f}{(-\Delta+\lambda \chi_{[0,1)^d})f}=\int_{\RR^d} |\nabla f|^2dx+\lambda\int_{[0,1)^d} |f|^2dx$$
for any $f\in C^\infty_c(\mathbb{R}^d)$. So fixing $f$, we get that $\displaystyle\lim_{\lambda\rightarrow-\infty}E^{min}_\lambda=-\infty$. 

Next using the fact that 
$$\lambda\chi_{[0,1)^d}\leq -\Delta+\lambda \chi_{[0,1)^d}\qquad \forall \lambda<0,$$
we have $\lambda<E^{min}_\lambda<0$ and so $\displaystyle\lim_{\lambda\uparrow 0}E^{min}_\lambda=0$. So using continuity of $E^{min}_\lambda$ we get the desired result.
\\\\ 
{\bf 2:} Define the event
\begin{equation}
 \label{posi-evnt}
 B^{\epsilon, k}_m=\{\omega: |V^\omega(j)|<\epsilon~~for~~|j-m|<k\},
\end{equation}
and note that for $|m|\gg k$, we have 
$$\mathbb{P}\big(B^{\epsilon, k}_m\big)\ge \bigg(1-\frac{1}{\epsilon^\delta|m|^{\alpha\delta}}\bigg)^{k^d}.$$ 
So for large enough $L$ we have $\mathbb{P}\big(B^{\epsilon, k}_m\big)>e^{-1}$ for $|m|>L$, hence
$$\sum_{m\in\ZZ^d}\mathbb{P}\big(B^{\epsilon, k}_m\big)=\infty.$$
Defining
\begin{align*}
\Omega_{n,\epsilon}&=\{\omega:\exists \{m_k\}_{k\in\mathbb{N}}\subset\mathbb{Z}^d~such~that~|m_p-m_q|>2n~\forall p\neq q\\
&\qquad\qquad\qquad\qquad~and~V^\omega(j)<\epsilon,~for~|j-m_k|<n ~\forall k\in\mathbb{N}\},
\end{align*}
and using previous observation along with Borel-Cantelli lemma, we conclude $\mathbb{P}(\Omega_{n,\epsilon})=1$ for any $n\in\mathbb{N}$ and $\epsilon>0$, 
so defining $\tilde{\Omega}=\cap_{n\ge 1}\Omega_{n, \frac{1}{n}}$, we have
\begin{equation}
 \mathbb{P}\big(\tilde{\Omega}\big)=1.
\end{equation}
Now for $E\in[0,\infty)$ observe that $\phi(x)=e^{\iota \sum_{i=1}^d k_ix_i}$ formally satisfies 
$$(-\Delta \phi)(x)=E\phi(x)\qquad\forall x\in\RR^d$$
for $\sum_i k_i^2=E$. Let $f\in C_c^\infty(\RR^d)$ be a non-negative function supported in the unit ball with
$$\int f(x)dx=1~\&~\left|\frac{\partial f}{\partial x_i}(x)\right|<M,~\left|\frac{\partial^2 f}{\partial x_i^2}(x)\right|<M\qquad \forall x\in\RR^d,1\leq i\leq d,$$
and define $\phi_r(x)=r^{-\frac{d}{2}}\phi(x)f(\frac{x}{r})$. Then observe that
\begin{align*}
\norm{(-\Delta-E)\phi_r}_2^2&=\int_{\RR^d}\left| \sum_{i=1}^d \left(\frac{2}{r^{\frac{d}{2}+1}}\frac{\partial \phi}{\partial x_i}(x)\frac{\partial f}{\partial x_i}\left(\frac{x}{r}\right)+ \frac{1}{r^{\frac{d}{2}+2}}\phi(x)\frac{\partial^2 f}{\partial x_i^2}\left(\frac{x}{r}\right)\right)\right|^2dx\\
&= O\left(\frac{1}{r^2}\right)\xrightarrow{r\rightarrow\infty}0
\end{align*}
By definition of  $\tilde{\Omega}$, for any $\omega\in\tilde{\Omega}$ there exists a sequence $\{x_n\}_{n\in\mathbb{N}}$ such that $|x_n-x_m|> n+m+1$ for all $n\neq m$ and
$$|V^\omega(j)|<\frac{1}{n}\qquad\forall |j-x_n|<n\qquad\forall n\in\mathbb{N}.$$
So defining $\psi_n(x)=\phi_{n}(x-x_n)$, we get $supp(\phi_n)\cap supp(\phi_m)=\phi$ for $n\neq m$ and
\begin{align*}
\big\|(H^\omega-E)\phi_n \big\|_2\leq  \big\|(-\Delta-E)\phi_n\big\|_2 +\big\|V^\omega\phi_n\big\|_2=O\left(\frac{1}{n}\right)\xrightarrow{n\rightarrow\infty}0.
\end{align*}
So $E$ is in the essential spectrum of $H^\omega$. Hence $[0,\infty)\subseteq\sigma_{ess}(H^\omega)$ almost surely.
\\\\
\noindent{\bf 3:}
Since we have already proven (2), we only need to show $\bigcup_{\lambda<0}\sigma(-\Delta+\lambda\chi_{(0,1]^d})\subset\sigma_{ess}(H^\omega)$ almost surely. 
So defining the events
$$A^{\epsilon, k}_m=\{\omega: V^\omega(m)\in(\lambda-\epsilon, \lambda+\epsilon)~and~|V^\omega(j)|<\epsilon~for~0<|j-m|<k\},$$
for $\lambda<0$ and $\epsilon>0$ such that $\lambda+\epsilon<0$,  observe that
\begin{align}\label{lem1eq1}
&\mathbb{P}\big(A^{\epsilon, k}_m| V^\omega(m)\in(\lambda-\epsilon, \lambda+\epsilon)\big)\nonumber \\
&\qquad\ge \prod_{0<|j-m|<k} \bigg(1-\frac{C}{\epsilon^\delta|j|^{\alpha\delta}}\bigg)\nonumber\\
&\qquad\approx \left(1-\frac{C}{\epsilon^\delta|m|^{\alpha\delta}}\right)^{k^d},
\end{align}
for $|m|\gg k$.
If $\alpha\delta\leq d$, set $0<\theta<\frac{\alpha \delta}{d}$ and $k_m=|m|^{\theta}$, then there exists $L$ such that for $m>L$
$$\left(1-\frac{C}{\epsilon^\delta|m|^{\alpha\delta}}\right)^{k_m^d}=e^{|m|^{d\theta}\ln\left(1-\frac{C}{\epsilon^\delta |m|^{\alpha\delta}}\right)}\approx e^{-C\frac{|m|^{d\theta-\alpha\delta}}{\epsilon^\delta}}\geq \frac{1}{2},$$
hence
\begin{align*}
&\sum_{m\in\mathbb{Z}^d}\mathbb{P}\big(A^{\epsilon, k_m}_m\big)=\sum_{|m|<L}\mathbb{P}\big(A^{\epsilon, k_m}_m\big)+\sum_{|m|>L}\mathbb{P}\big(A^{\epsilon, k_m}_m\big)\\
&\qquad \geq\sum_{|m|>L} \mathbb{P}[V^\omega(m)\in(\lambda-\epsilon,\lambda+\epsilon)]\left(1-\frac{C}{\epsilon^\delta|m|^{\alpha\delta}}\right)^{k^d_m}\\
&\qquad \ge \frac{1}{2}\sum_{|m|>L} \mathbb{P}[V^\omega(m)\in(\lambda-\epsilon,\lambda+\epsilon)]= \frac{C_\lambda (\epsilon)}{2}\sum_{|m|>L} \frac{1}{|m|^{\alpha\delta}}=\infty.
\end{align*}
In the above we used
$$\mathbb{P}[V^\omega(m)\in(\lambda-\epsilon,\lambda+\epsilon)] = \frac{C_\lambda(\epsilon)}{|m|^{\alpha\delta}},$$
which follows from (1) of Hypothesis \ref{hyp}. 
One can express $C_\lambda (\epsilon)\approx\frac{C}{|\lambda+\epsilon|^\delta} -\frac{C}{|\lambda-\epsilon|^\delta}$ for some $C>0$.

Now following argument similar to previous part and using Borel-Cantelli lemma, we have $\mathbb{P}\big(\Omega_{\lambda,\epsilon}\big)=1$, where
\begin{align*}
\Omega_{\lambda,\epsilon}&=\{\omega:\exists\{m_n\}_{n\in\mathbb{N}}\text{ in $\ZZ^d$ such that} |m_p-m_q|> k_{m_p}+k_{m_q}~\forall p,q\in\mathbb{N},~\&\\
&\qquad\qquad |V^\omega(m_p)-\lambda|<\epsilon,~|V^\omega(j)|<\epsilon~\forall 0<|j-m_p|<k_{m_p}~\forall p,q\in\mathbb{N} \}.
\end{align*}
Observe that for any $\omega\in\Omega_{\lambda,\epsilon}$, 
there exists an infinite sequence of disjoint cubes $\{\Lambda_{k_{m_p}}(m_p)\}_{p\in\mathbb{N}}$ such that $|V^\omega(m_p)-\lambda|<\epsilon$ and 
$|V^\omega(j)|<\epsilon$ for $j\in\Lambda_{k_{m_p}}(m_p)\setminus\{m_p\}$. 
Defining $\Omega_\lambda=\bigcap_{n=1}^\infty\Omega_{\lambda,\frac{1}{n}}$, we have $\mathbb{P}(\Omega_\lambda)=1$.
Now let $\{q_n\}_{n\in\mathbb{N}}$ be an enumeration of rationals and define $\tilde{\Omega}=\cap_{n\in\mathbb{N}} \Omega_{q_n}$, then we have $\mathbb{P}(\tilde{\Omega})=1$.

For some rational $\lambda<0$, let $E\in \sigma(-\Delta+\lambda\chi_{[0,1)^d})$, then we can construct a sequence of functions $\{\psi_n\}$ in $C_c^\infty(\RR^d)$ with $\big \|\psi_n\big \|_2=1$ such that
\begin{equation}
\label{ess}
\big \|(-\Delta-\lambda\chi_{[0,1)^d}-E)\psi_n\big \|_2<\frac{1}{n}.
\end{equation}
Since $\psi_n$ are compactly supported, let $\{r_n\}_{n\in\mathbb{N}}$ be such that $supp(\psi_n)\subset\Lambda_{r_n}(0)$. 
For any $\omega\in\tilde{\Omega}$, by definition there exists $\{m_n\}_{n\in\mathbb{N}}$ in $\ZZ^d$ such that 
$$|V^\omega(m_n)-\lambda|<\frac{1}{n}~~~\&~|V^\omega(j)|<\frac{1}{n}~\forall j\in\Lambda_{k_{m_n}}(m_n)\setminus\{m_n\},$$
and the cubes $\{\Lambda_{k_{m_n}}(m_n)\}_{n\in\mathbb{N}}$ are pairwise disjoint. 
Using the subsequence $\{m_{n_p}\}_{p\in\mathbb{N}}$ of $\{m_n\}_{n\in\mathbb{N}}$ which satisfies $k_{m_{n_p}}>r_p$, define $\phi_p(x)=\psi_p(x-m_{n_p})$ and observe that $\{\phi_p\}_{p\in\mathbb{N}}$ has disjoint support and 
\begin{align*}
\big\| (H^\omega-E)\phi_p\big\|_2&\leq \big\| (-\Delta+\lambda \chi_{m_{n_p}+[0,1)^d}-E)\phi_p\big\|_2\\
&\qquad+\norm{\left(-\lambda\chi_{m_{n_p}+[0,1)^d}+ \sum_{j\in\Lambda_{k_{m_{n_p}}}(m_{n_p})} V^\omega(j)\chi_{j+[0,1)^d}\right)\phi_p}_2\\
&\leq \frac{1}{p}+\frac{1}{n_p}\xrightarrow{p\rightarrow\infty}0.
\end{align*}
So for $E$, we have a sequence of approximate eigenvectors which are orthonormal, hence $E$ is in the essential spectrum of $H^\omega$ almost surely.

\end{proof}

\noindent As stated before, the results involving essential spectrum for the Theorem \ref{main-thm} follows from above lemma.
For part (b), we estimate the number of eigenvalues of $H^\omega$ below $E$, for $E<0$, and show that it is finite almost surely for $\alpha\delta>d$.
To show the exponential localization inside $(-\infty, 0)$, we use multiscale analysis.  
We use Combes-Thomas estimate \cite[Thorem 2.4.1]{stolman} together with Wegner estimate as given in \cite[Theorem (Wegner estimate)]{peter-stol} 
to get the required decay for Green's functions. 
For part (d) we verified the conditions given in \cite[Theorem 2.3]{denis} to show the wave operators exist.
\\\\
\noindent {\bf Proof of (a) for Theorem \ref{main-thm}:}\\
The conclusion of part (a) of the Theorem \ref{main-thm} follows from first and third part of the above lemma.
\\
\qed\\
{\bf Proof of (b) for Theorem \ref{main-thm}:}\\
For part (b) of Theorem \ref{main-thm}, we only need to show that the spectrum of $H^\omega$ below zero is discrete. 
This is because part (2) of lemma \ref{lemma1} already tells us that $[0,\infty)$ is in the essential spectrum.

Let $H^\omega_{n, N}$ denote the restriction of $H^\omega$ onto $n+(0,1]^d$ with Neumann boundary condition.
Let $\mathcal{N}^\omega(E)$ denote the number of eigenvalues of $H^\omega$ which are below $E$ and $\mathcal{N}^\omega_{n, N}(E)$ denote the same for $H^\omega_{n, N}$.
Using the fact that
\begin{equation}\label{dirih-neu}
\displaystyle\oplus_{n\in\mathbb{Z}^d}H^\omega_{n,N}\leq H^\omega.
\end{equation}
and Min-Max principle we have
\begin{equation}\label{eig-inq}
\mathcal{N}^\omega(E)\leq \sum_{n\in\mathbb{Z}^d}\mathcal{N}^\omega_{n, N}(E).
\end{equation}
Using \cite[Proposition 2]{RS} for $\epsilon>0$ we have
\begin{equation}\label{eig-esst}
\mathcal{N}^\omega_{n, N}(-\epsilon)\leq C_0\bigg(-\frac{\omega_n}{|n|^\alpha}-\epsilon\bigg)^{\frac{d}{2}}+
C_1\bigg[1+\bigg(-\frac{\omega_n}{|n|^\alpha}-\epsilon\bigg)^{\frac{d-1}{2}}\bigg],
\end{equation}
whenever $\frac{\omega_n}{|n|^\alpha}<-\epsilon$. For $\frac{\omega_n}{|n|^\alpha}>-\epsilon$, the operator  $H^\omega_{n, N}+\epsilon$ 
is positive definite, hence $\mathcal{N}^\omega_{n, N}(-\epsilon)=0$.\\
Let $\alpha\delta>d$, then using $\mu\big((-\infty,-R)\big)<\frac{C}{R^\delta}$ for large $R>0$ (follows from part (1) of Hypothesis \ref{hyp}), we have
$$\sum_{|n|>M}\mathbb{P}\bigg(\omega:\frac{\omega_n}{|n|^\alpha}<-\epsilon\bigg)<\frac{C}{\epsilon^\delta}\sum_{|n|>M}\frac{1}{|n|^{\alpha\delta}}<\infty $$
for $M\gg 1$.
Now Borel-Cantelli lemma will imply that almost surely  
$$\#\{n\in\ZZ^d: \mathcal{N}^\omega_{n, N}(-\epsilon)>0\}<\infty,$$
hence using \eqref{eig-inq} we have 
$$\mathcal{N}^\omega(-\epsilon)<\infty~~almost~surely. $$
Using (2) of lemma \ref{lemma1} we conclude the part (b) of the Theorem \ref{main-thm}.
\\
\qed\\
\noindent{\bf Proof of (c) for the Theorem \ref{main-thm}:}\\
To show exponential localization below zero, we will perform Multiscale analysis on $(-\infty, E')$ for each $E'<0$.
Here we use Bootstrap multiscale analysis \cite[Theorem 3.4]{GA} developed by Germinet-Klein.
To use multiscale analysis, we need two estimates, first Wegner estimate uniform with respect to center of the box, since our potential is not ergodic.
The second estimate is known as Initial scale estimate.
\\
But first let us setup few notations. Let $\Lambda_L(x)$  denote the cube centered at $x$ with side length $L$.
The restriction $H^\omega_{\Lambda_L(0)}$ of $H^\omega$ onto the Hilbert subspace $L^2(\Lambda_L(0))$ can be written as
\begin{align}
\label{res-box}
 H^\omega_{\Lambda_L(0)} &=-\Delta_L+\sum_{n\in\Lambda_L(0)}a_n\omega_n\chi_{_{(0,1]^d}}(x-n),
 \end{align}
 here we use Dirichlet boundary condition to define the Laplacian.
 For simplicity of notation we denote
 $$V^\omega(n)=a_n\omega_n~~and~~\Omega_L=\big\{\omega: |V^\omega(n)|<L^a,~n\in\Lambda_L(0) \big\},~for~some~a>0. $$
 ~\\\\
 {\bf Wegner Estimate:}\\
Here we use the following version of the Wegner estimate, 
\begin{equation}
 \label{weg}
\sup_{n\in\mathbb{Z}^d}\mathbb{P}\bigg(dist\big(\sigma\big(H^\omega_{\Lambda_L(n)}\big), E\big)
 <\eta \bigg\vert \Omega_L\bigg)\leq Q_s  \eta^s L^{d+\gamma a},~~E<E'<0.
\end{equation}
which is obtained by simple modification of \cite[Lemma (Wegner estimate)]{FGKM}.
In the estimate \eqref{weg}, the terms $\eta^s$ (for $0<s\leq 1$) and $L^d$ appears by same reasoning as  \cite[Lemma (Wegner estimate)]{FGKM}. 
The extra term $L^{\gamma a}$ (for some suitably chosen $\gamma \in\mathbb{N}$) shows up because 
 $\big|V^\omega(n)\big|\leq L^a~~\forall~n\in\Omega_L$. The constant $Q_s$ is uniform in $\alpha\ge0$ and $E\in (-\infty, E')$, once $E'<0$ is fixed.
 \\\\
{\bf Initial Scale estimate: }\\
Once we have the Wegner estimate (\ref{weg}), then using Combes-Thomas estimate \cite[Theorem 2.4.1]{stolman},
along with the fact that $dist\big(\partial\Lambda_L,\Lambda_{\frac{L}{3}}(0)\big) =O(L)$, we obtain the required decay for Green's function as follows:
\begin{align}
\label{ISE}
&\mathbb{P}\bigg(\bigg\|\chi_{_{\partial\Lambda_L}}\big(H^\omega_{_{\Lambda_L(0)}}-E\big)^{-1}\chi_{_{\Lambda_\frac{L}{3}(0)}}\bigg\|\leq ce^{-mL}\bigg)\\
&\qquad\qquad\qquad\ge \mathbb{P}\bigg(dist\big(\sigma\big(H^\omega_{\Lambda_L(0)}, E\big)>L^{-b}\bigg\vert\Omega_L\bigg)\mathbb{P}\big(\Omega_L\big)\nonumber\\
&\qquad\qquad\qquad\ge \bigg(1-\frac{Q_s}{L^{bs-d-\gamma a}}\bigg)\bigg(1-\frac{L^d}{L^{\delta(a+\alpha)}}\bigg)\nonumber\\
&\qquad\qquad\qquad=1-O\bigg(\frac{1}{L^{\min\{bs-d-\gamma a, a\delta+\alpha\delta-d\}}}\bigg).
\end{align}
Now if we choose $b>\frac{d+\gamma a}{s}$ and $a>\frac{d}{\delta}-\alpha$,  then
using \cite[Theorem 3.4]{GA} (or \cite[Theorem 5.4]{AK}), we can show that $(-\infty, 0)\subset\sigma_{pp}(H^\omega)$
with exponentially decaying eigenfunctions almost surely. \\
\qed
\\\\
\noindent{\bf Proof of (d) for the Theorem \ref{main-thm}:}\\
Here we use the modification of Cook's method as given in \cite[Theorem 2.3]{denis} to show the existence of wave operator. 
So if we take $\delta>2$ and $\alpha>1$
it is easy to verify the following two conditions:
\begin{equation}
 \int_{\mathbb{R}^d} \big(1+|x|\big)^{-2m}\big(V^\omega(x)\big)^2dx<\infty,~almost~ surely,~for~some~m>0.\nonumber
\end{equation}
\begin{equation}
 \int_1^\infty\bigg(\int_{a<|x|<b}\big(V^\omega(xt)\big)^2dx\bigg)^{\frac{1}{2}}dt<\infty,~almost~surely,~~0<a<b.\nonumber
\end{equation}
which completes the proof of the theorem.

\end{section}

\appendix\section{Appendix}
\begin{lemma}\label{lemma2}
For $(2+\alpha)\delta>d$, the operator $H^\omega$ is essentially self adjoint where the domain of definition is $C_c^\infty(\RR^d)$.
\end{lemma}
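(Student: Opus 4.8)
The plan is to reduce essential self-adjointness to a deterministic lower bound on the potential and then invoke a standard self-adjointness criterion. The key observation is that even though the single-site distribution has only a fat tail, the decay $a_n = O(|n|^{-\alpha})$ of the coefficients together with the hypothesis $(2+\alpha)\delta > d$ forces $V^\omega$ to grow no faster than quadratically, almost surely. This quadratic growth is exactly the borderline case handled by the Faris--Lavine theorem.

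First I would run a Borel--Cantelli argument to control the size of the potential. Consider the events $E_n = \{\omega : |\omega_n| > |n|^{2+\alpha}\}$ for $n \in \ZZ^d$. By part (1) of Hypothesis \ref{hyp} the tail bound $\frac{d\mu}{dx}(x) = O(|x|^{-(1+\delta)})$ gives $\mathbb{P}(E_n) = O(|n|^{-(2+\alpha)\delta})$, and since $(2+\alpha)\delta > d$ the series $\sum_{n \in \ZZ^d}\mathbb{P}(E_n)$ converges. Hence almost surely only finitely many $E_n$ occur, so there is a random $N = N(\omega)$ with $|\omega_n| \leq |n|^{2+\alpha}$ for all $|n| > N$. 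On the box $n + (0,1]^d$ this yields $|V^\omega(x)| = |a_n\omega_n| \leq C\,|n|^{-\alpha}|n|^{2+\alpha} = C|n|^2$, and since $|n| = \|n\|_\infty \le |x| + 1$ on that box, one gets $|V^\omega(x)| \le C(1+|x|^2)$ there. Absorbing the finitely many boxes with $|n| \le N$ (on each of which $V^\omega$ is constant, hence bounded) into the constant, we obtain a random constant $C_\omega$ with
\[
|V^\omega(x)| \leq C_\omega\,(1 + |x|^2)\qquad\text{for a.e. } x\in\RR^d,\ \text{almost surely}.
\]
In particular $V^\omega(x) \geq -C_\omega(1+|x|^2)$.

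With this lower bound in hand the remaining step is routine. Since $V^\omega$ is a step function it is locally bounded, hence $V^\omega \in L^2_{loc}(\RR^d)$. A real-valued potential in $L^2_{loc}$ that is bounded below by $-c(1+|x|^2)$ makes $-\Delta + V^\omega$ essentially self-adjoint on $C_c^\infty(\RR^d)$ by the Faris--Lavine essential self-adjointness criterion (see, e.g., Reed--Simon, Theorem X.38). Applying this on the almost sure event constructed above gives the claim.

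I expect the only genuine content to be the quadratic growth estimate. The exponent $2+\alpha$ is chosen precisely so that the tail probabilities are summable exactly when $(2+\alpha)\delta > d$, and the ``$2$'' reflects that the quadratic lower bound $-c|x|^2$ is the sharp admissible threshold in the Faris--Lavine criterion; a slower-growing bound (say linear) would force the strictly more restrictive condition $(1+\alpha)\delta > d$. The main points to be careful about are that Borel--Cantelli only controls the potential for large $|n|$, so the finitely many exceptional boxes must be handled separately (trivially, since $V^\omega$ is bounded on each), and that one needs only the \emph{lower} bound on $V^\omega$ — the repulsive (positive) part of the potential is harmless for essential self-adjointness.
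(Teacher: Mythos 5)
Your proof is correct and follows essentially the same route as the paper: a Borel--Cantelli argument using the tail bound on $\mu$ to show that the potential is almost surely dominated by a quadratic at infinity, followed by a classical essential self-adjointness criterion for potentials bounded below by $-c(1+|x|^2)$. The only cosmetic differences are that the paper controls just the negative part $V^\omega_-$ with the slightly stronger bound $O(|x|^{2-\epsilon})$ and cites Eastham--Evans--McLeod rather than Faris--Lavine, but these criteria are interchangeable here and your threshold $|n|^{2+\alpha}$ uses the hypothesis $(2+\alpha)\delta>d$ in exactly the same way.
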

\begin{proof}
For $\omega\in\Omega$, define
$$V^\omega_{-}(x)=\sum_{\substack{n\in\mathbb{Z}^d}} |V^\omega(n)|~ \chi_{\{\omega_n<0\}}(\omega) ~\chi_{n+[0,1)^d}(x),$$
and observe that $V^\omega(x)+V^\omega_{-}(x)\geq 0$ for all $x\in\RR^d$ and almost all $\omega$. 
The essential self adjointness will follow from \cite[Theorem A]{EEM} once we have $V^\omega_{-}(x)=o(x^2)$ as $|x|\to\infty$ almost surely. 
Observe that for $0<\epsilon<2+\alpha-\frac{d}{\delta}$ and large enough $N$ we have
\begin{align*}
& \mathbb{P}[\omega\in\Omega: \exists |n|>N ~such~that~\omega_n<0~\&~|V^\omega(n)|>|n|^{2-\epsilon}]\\
&\qquad\leq \sum_{|n|>N}\mathbb{P}[|V^\omega(n)|>|n|^{2-\epsilon}~\&~\omega_n<0]\\
&\qquad\leq C_\delta\sum_{|n|>N} \frac{1}{|n|^{(2-\epsilon+\alpha)\delta}}\leq \tilde{C}_{d,\delta}\sum_{k\geq N} \frac{k^{d-1}}{k^{(2-\epsilon+\alpha)\delta}}\\
&\qquad\leq \frac{C^\prime_{d,\delta}}{N^{(2-\epsilon+\alpha)\delta-d}},
\end{align*}
which converges to zero under the condition $(2+\alpha-\epsilon)\delta>d$. Hence using  Borel-Cantelli lemma we have
$$\mathbb{P}\left[\bigcap_{N\in\mathbb{N}} \bigcup_{n=N}^\infty \{\omega: \exists |m|=n ~such~that~\omega_m<0~\&~|V^\omega(m)|>n^{2-\epsilon} \}\right]=0$$
Hence defining
\begin{equation*}
\tilde{\Omega}:=\left\{\omega: \#\{n\in\mathbb{Z}^d: \omega_n<0~\&~|V^\omega(n)|>|n|^{2-\epsilon}\}<\infty\right\},
\end{equation*}
we have $\mathbb{P}\big(\tilde{\Omega}\big)=1$.
So the random variable
$$N^\omega=\max\{|n|:\omega_n<0~\&~|V^\omega(n)|>|n|^{2-\epsilon}\},$$
is finite  almost surely, hence setting 
$$M^\omega=\max\left\{2,\max\left\{ \frac{|V^\omega(n)|}{(1+|n|)^{2-\epsilon}}: \omega_n<0~\&~|n|\leq N^\omega\right\}\right\},$$ 
we observe that
$$V^\omega_{-}(x)\leq M^\omega\sum_{n\in\ZZ^d} (1+|n|)^{2-\epsilon} \chi_{n+[0,1)^d}(x)\leq  M^\omega (1+|x|)^{2-\epsilon}.$$
So using \cite[Theorem A]{EEM}, we conclude that the operator $H^\omega$ is essentially self adjoint on the domain $C_c^\infty(\RR^d)$ almost surely.
\end{proof}

\noindent Next lemma shows that if $\mu$ and $\{a_n\}_n$ decays fast enough at infinity, then there are finitely many negative eigenvalues of $H^\omega$ almost surely.
\begin{lemma}\label{lemma3}
For $(\alpha-2)\delta>d$, the operator $H^\omega$ defined by \eqref{model} has finitely many negative eigenvalues
\end{lemma}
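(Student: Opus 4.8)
The plan is to reduce the statement to a deterministic decay bound on the negative part of the potential, and then feed that bound into the Cwikel--Lieb--Rozenblum (CLR) inequality, rather than trying to push the Neumann bracketing of part (b) down to energy zero. The reason the bracketing cannot be used directly is the main obstacle here: the Neumann Laplacian on each unit cube carries a constant zero mode, so $\mathcal{N}^\omega_{n,N}(0)\ge 1$ for every $n$ with $V^\omega(n)<0$; since $\mu$ charges $(-\infty,0)$, the event $\{\omega_n<0\}$ has a fixed positive probability and hence occurs for infinitely many $n$ almost surely. Thus $\sum_n\mathcal{N}^\omega_{n,N}(0)=\infty$ and \eqref{eig-inq} becomes vacuous at $E=0$. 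One therefore needs a genuinely global eigenvalue bound that is insensitive to this spurious accumulation at the bottom of the essential spectrum. (Note that $(\alpha-2)\delta>d$ forces $(\alpha+2)\delta>d$, so Lemma \ref{lemma2} already guarantees that $H^\omega$ is essentially self adjoint and the counting is meaningful.)

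First I would record, by a Borel--Cantelli argument in the exact spirit of Lemma \ref{lemma2}, that the super-critical condition $(\alpha-2)\delta>d$ forces $V^\omega_-$ to decay strictly faster than $|x|^{-2}$. Fix $\beta$ with $2<\beta<\alpha-\tfrac{d}{\delta}$, which is possible precisely because $(\alpha-2)\delta>d$. Using $V^\omega(n)=\omega_n/|n|^\alpha$ for large $|n|$ and the tail bound $\mathbb{P}(|\omega_0|>t)=O(t^{-\delta})$ coming from (1) of Hypothesis \ref{hyp}, one gets
\begin{align*}
\sum_{|n|>N}\mathbb{P}\big(|V^\omega(n)|>|n|^{-\beta}\big)
&=\sum_{|n|>N}\mathbb{P}\big(|\omega_n|>|n|^{\alpha-\beta}\big)\\
&\le C\sum_{k>N}k^{d-1}\,k^{-(\alpha-\beta)\delta}<\infty,
\end{align*}
since $(\alpha-\beta)\delta>d$. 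Borel--Cantelli then produces a full-measure set on which $|V^\omega(n)|\le |n|^{-\beta}$ for all but finitely many $n$, whence, bounding the finitely many exceptional boxes by a finite random constant $M^\omega$ exactly as in Lemma \ref{lemma2},
$$V^\omega_-(x)\le M^\omega\,(1+|x|)^{-\beta},\qquad \beta>2,\quad\text{almost surely.}$$

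With this pointwise decay in hand the finiteness of the negative spectrum is standard. Since $\sigma_{ess}(H^\omega)=[0,\infty)$ by part (b), the spectrum below zero consists of finitely-degenerate eigenvalues, and for $d\ge 3$ the CLR bound applied to $H^\omega$ (which only sees $V^\omega_-$) gives
$$\mathcal{N}^\omega(0)\le C_d\int_{\mathbb{R}^d}\big(V^\omega_-(x)\big)^{d/2}\,dx\le C_d\,(M^\omega)^{d/2}\int_{\mathbb{R}^d}(1+|x|)^{-\beta d/2}\,dx<\infty$$
almost surely, the last integral converging exactly because $\beta d/2>d$. Equivalently, one may avoid CLR by an IMS localization into a ball $B_R$ and its complement: on $B_R^c$ Hardy's inequality $-\Delta\ge\frac{(d-2)^2}{4|x|^2}$ dominates $V^\omega_-=O(|x|^{-\beta})$ once $R$ is large (as $\beta>2$), so $H^\omega$ is nonnegative there up to a bounded, compactly supported localization error, leaving a compact-resolvent operator with finite negative spectrum. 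Either route yields Lemma \ref{lemma3} for $d\ge 3$; the cases $d=1,2$ follow identically from the corresponding one- and two-dimensional bounds on the number of bound states of potentials decaying faster than $|x|^{-2}$. I expect the probabilistic decay estimate to be the only real work, the operator-theoretic conclusion being a black-box application of CLR.
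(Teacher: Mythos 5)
Your argument is correct and follows essentially the same route as the paper: the identical Borel--Cantelli reduction (with $2<\beta<\alpha-\tfrac{d}{\delta}$ playing the role of the paper's $\epsilon$) to the almost-sure bound $V^\omega_-(x)\le M^\omega(1+|x|)^{-\beta}$, followed by the classical fact that a potential decaying faster than $|x|^{-2}$ produces only finitely many bound states. The only difference is cosmetic: the paper invokes that fact via the comparison $H^\omega\ge-\Delta-M^\omega(1+|x|^\epsilon)^{-1}$ without naming a mechanism, whereas you make it explicit through CLR (or Hardy plus IMS), which is a reasonable way to fill in the same black box.
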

\begin{proof}
For $0<\epsilon<\alpha-\frac{d}{\delta}$ define
$$A_N=\{\omega: \exists |n|>N~such~that~\omega_n<0~\&~|V^\omega(n)|>|n|^{-\epsilon}\}.$$
Then observe that
\begin{align*}
\mathbb{P}(A_N)&\leq \sum_{|n|>N}\mathbb{P}(\omega_n<0~\&~|V^\omega(n)|>|n|^{-\epsilon})\\
&\leq \sum_{|n|>N}\frac{C}{|n|^{(\alpha-\epsilon)\delta}}\approx C\sum_{k>N}\frac{k^{d-1}}{k^{(\alpha-\epsilon)\delta}}=O\left(\frac{1}{N^{(\alpha-\epsilon)\delta-d}}\right),
\end{align*}
so like lemma \ref{lemma2}, we can use Borel-Cantelli lemma to prove that
\begin{equation*}
\mathbb{P}\left(\omega:\text{ $\omega_n<0~\&~|V^\omega(n)|>|n|^{-\epsilon}$ only for finitely many $n$ in $\ZZ^d$} \right)=1,
\end{equation*}
which in turn implies
$$N^\omega:=\max\{|n|:\omega_n<0~\&~|V^\omega(n)|>|n|^{-\epsilon}\}$$
is finite almost surely. Hence defining 
$$M^\omega=\max\left\{2,\max\left\{\frac{|V^\omega(n)|}{1+|n|^\epsilon}:|n|\leq N^\omega\right\}\right\},$$
we see that 
\begin{equation}\label{lem3eq1}
 H^\omega\geq -\Delta-\frac{M^\omega}{1+|x|^\epsilon}\qquad almost~surely.
\end{equation}
So if $\alpha-\frac{d}{\delta}>2$, then we can choose $\epsilon>2$ and since RHS of \eqref{lem3eq1} has finitely many negative eigenvalues, we get the desired result.

\end{proof}


\end{document}